\documentclass[11pt]{amsart}

\usepackage[pdftex]{graphicx}
\usepackage[a4paper,margin=2.5cm]{geometry}

\usepackage{amsfonts}
\usepackage{amsmath}
\usepackage{amssymb}
\usepackage{amsthm}
\usepackage{dsfont}
\usepackage{tikz}
\usepackage{bm}
\usepackage{thmtools}

\usepackage{url}

\usepackage{rotating}

\usepackage[T1]{fontenc}

\usepackage{paralist}
\usepackage{color}

\usepackage{overpic}

\usepackage[colorlinks,cite color=blue,pagebackref=true,pdftex]{hyperref}
\usepackage{cleveref}
\usepackage{float}

\renewcommand\emptyset{\varnothing}

\newcommand\ord{\mathcal{O}}
\newcommand\ordC{\mathcal{L}}

\renewcommand\l{\lambda}
\newcommand\Z{\mathbb{Z}}

\newcommand\R{\mathbb{R}}

\newcommand\OGT{\mathcal{GT}}

\renewcommand\int{\mathsf{int}}

\newcommand{\PS}{\operatorname{PS}}

\newtheorem{thm}{Theorem}[section]

\newtheorem{prop}[thm]{Proposition}

\theoremstyle{definition}

\newtheorem{example}[thm]{Example}

\title[Ehrhart positivity for marked order polytopes]{Ehrhart positivity for marked order polytopes}

\author{Katharina Jochemko}
\author{Krishna Menon}
\address{KTH Royal Institute of Technology, %
Department of Mathematics, %
SE-100 44 Stockholm, Sweden}
\email{\{jochemko,puzhan\}@kth.se}

\keywords{Ehrhart positivity, order preserving maps, marked order polytopes, skew shapes, Gelfand-Tsetlin pattern}
\subjclass[2020]{05A17,06A07, 52B12,52B20}

\date{\today}
\thanks{KJ was partially supported by the Wallenberg AI, Autonomous Systems and Software program funded by the Knut and Alice Wallenberg foundation, grant nr 2023-04063 from the Swedish research council and the Verg
Foundation. KM was partially supported by the Verg foundation and the Göran Gustafsson foundation.}

\begin{document}

\begin{abstract}
    Given a pair of finite posets $A \subseteq P$, the function counting integer-valued order preserving extensions of an order preserving map $\l : A
    \rightarrow \mathbb{Z}$ from $A$ to $P$ is given by a piecewise polynomial in $\l$. We provide a criterion for the nonnegativity of the coefficients of these multivariate polynomials and apply it to show that marked order polytopes of skew shapes are Ehrhart positive in a multivariate sense. This extends recent results of Ferroni-Morales-Panova on order polytopes of skew shapes and proves conjectures on the Ehrhart positivity of skew Gelfand-Tsetlin polytopes and $m$-generalized  Pitman-Stanley polytopes due to Alexanderson-Alhajjar and Dugan-Hegarty-Morales-Raymond, respectively. 
\end{abstract}

\maketitle

\section{Introduction and Results}
The enumeration of order preserving maps is a classical topic in enumerative combinatorics. Given a finite partially ordered set (poset) $P$, a map $\l$ from  $P$ into the $n$-chain $[n]=\{1,\ldots, n\}$ is called order preserving if $\l (p)\leq \l(q)$ whenever $p\prec _P q$. A famous result by Stanley~\cite{Stanleychromatic-like} states that the cardinality of these maps is given by a polynomial $\Omega _P(n)$ for positive integers $n$, called the order polynomial of the poset $P$. 

The polynomiality of $\Omega _P(n)$ can be obtained via polyhedral-geometric methods and constitutes a prime example of an application of Ehrhart theory to enumeration questions: a fundamental theorem by Ehrhart~\cite{Ehrhart} states that for any lattice polytope $Q\subset \R ^d$, the number of lattice points in the $n$-th dilate of $Q$, $|nQ\cap \mathbb{Z}^d|$, is given by a polynomial $E_Q (n)$ for $n\geq 0$, called the Ehrhart polynomial of $Q$. By identifying order preserving maps into the $n$-chain with lattice points in dilates of the order polytope
\[
\mathcal{O}_P \ = \ \{\l \colon P \rightarrow [0,1] \text{ order preserving}\} \subset \mathbb{R}^P \, ,
\]
Stanley~\cite{Stanleytwoposetpolytopes} obtained that $\Omega _P (n)=E_{\mathcal{O}_P}(n-1)$, thereby providing a further, geometric proof of the polynomiality.

A fundamental question is to characterize Ehrhart polynomials and to interpret their coefficients. Ehrhart polynomials can have negative coefficients in general and there exist also examples for order polytopes~\cite{StanleyMathoverflow,LiuTsuchiya}. Identifying geometric and combinatorial properties that lead to nonnegative coefficients in Ehrhart polynomials is thus of particular interest; see the surveys~\cite{LiuSurvey,ferronihigashitani}. Lattice polytopes whose Ehrhart polynomials have only nonnegative coefficients are called Ehrhart positive.

In recent work, Ferroni-Morales-Panova~\cite{skewshapes} showed nonnegativity of the coefficients of the order polynomial for skew-shaped posets, which include zig-zag and more general fence posets. 
The poset $P$ associated to a skew-shape $\l/\mu$ for partitions $\lambda=(\lambda _1\geq \lambda _2 \geq \cdots )$ and $\mu=(\mu _1\geq \mu _2 \geq \cdots )$ with $\mu _i\leq \lambda _i$ is constructed as follows: 
The elements of the poset correspond to the cells of the skew-shape, i.e., $\{(i, j) \colon j \in [\mu_i + 1, \l_i]\}$. 
The covering relations are given by $(i + 1, j) \prec_P (i, j)$ and $(i, j + 1) \prec_P (i, j)$ (see \Cref{fig:skew}).

\begin{figure}
    \centering
    \begin{tikzpicture}[scale = 0.6]
        \draw (0, 0) grid (3, 1);
        \draw (1, 1) grid (3, 2);
        \draw (1, 2) grid (5, 3);
        \draw (2, 3) grid (6, 4);
        \node at (7, 2) {$\rightarrow$};
        \node at (3, -1) {\(\l/\mu\)};
    \end{tikzpicture}
    \hspace{0.2cm}
    \begin{tikzpicture}[scale = 0.6]
        \draw (0, 0) grid (3, 1);
        \draw (1, 1) grid (3, 2);
        \draw (1, 2) grid (5, 3);
        \draw (2, 3) grid (6, 4);
        \draw (1.5, 0.5) -- (1.5, 2.5);
        \draw (2.5, 0.5) -- (2.5, 3.5);
        \draw (3.5, 2.5) -- (3.5, 3.5);
        \draw (4.5, 2.5) -- (4.5, 3.5);

        \draw (0.5, 0.5) -- (2.5, 0.5);
        \draw (1.5, 1.5) -- (2.5, 1.5);
        \draw (1.5, 2.5) -- (4.5, 2.5);
        \draw (2.5, 3.5) -- (5.5, 3.5);

        \node[circle, fill = black, inner sep = 1pt] at (0.5, 0.5) {};
        \node[circle, fill = black, inner sep = 1pt] at (1.5, 0.5) {};
        \node[circle, fill = black, inner sep = 1pt] at (1.5, 1.5) {};
        \node[circle, fill = black, inner sep = 1pt] at (1.5, 2.5) {};
        \node[circle, fill = black, inner sep = 1pt] at (2.5, 0.5) {};
        \node[circle, fill = black, inner sep = 1pt] at (2.5, 1.5) {};
        \node[circle, fill = black, inner sep = 1pt] at (2.5, 2.5) {};
        \node[circle, fill = black, inner sep = 1pt] at (2.5, 3.5) {};
        \node[circle, fill = black, inner sep = 1pt] at (3.5, 2.5) {};
        \node[circle, fill = black, inner sep = 1pt] at (3.5, 3.5) {};
        \node[circle, fill = black, inner sep = 1pt] at (4.5, 2.5) {};
        \node[circle, fill = black, inner sep = 1pt] at (4.5, 3.5) {};
        \node[circle, fill = black, inner sep = 1pt] at (5.5, 3.5) {};
        \node at (7, 2) {$\rightarrow$};
        \node at (3, -1) {};
    \end{tikzpicture}
    \hspace{0.2cm}
    \begin{tikzpicture}[scale = 0.7, rotate = -45]
            \draw (1.5, 0.5) -- (1.5, 2.5);
        \draw (2.5, 0.5) -- (2.5, 3.5);
        \draw (3.5, 2.5) -- (3.5, 3.5);
        \draw (4.5, 2.5) -- (4.5, 3.5);

        \draw (0.5, 0.5) -- (2.5, 0.5);
        \draw (1.5, 1.5) -- (2.5, 1.5);
        \draw (1.5, 2.5) -- (4.5, 2.5);
        \draw (2.5, 3.5) -- (5.5, 3.5);

        \node[circle, fill = black, inner sep = 2pt] at (0.5, 0.5) {};
        \node[circle, fill = black, inner sep = 2pt] at (1.5, 0.5) {};
        \node[circle, fill = black, inner sep = 2pt] at (1.5, 1.5) {};
        \node[circle, fill = black, inner sep = 2pt] at (1.5, 2.5) {};
        \node[circle, fill = black, inner sep = 2pt] at (2.5, 0.5) {};
        \node[circle, fill = black, inner sep = 2pt] at (2.5, 1.5) {};
        \node[circle, fill = black, inner sep = 2pt] at (2.5, 2.5) {};
        \node[circle, fill = black, inner sep = 2pt] at (2.5, 3.5) {};
        \node[circle, fill = black, inner sep = 2pt] at (3.5, 2.5) {};
        \node[circle, fill = black, inner sep = 2pt] at (3.5, 3.5) {};
        \node[circle, fill = black, inner sep = 2pt] at (4.5, 2.5) {};
        \node[circle, fill = black, inner sep = 2pt] at (4.5, 3.5) {};
        \node[circle, fill = black, inner sep = 2pt] at (5.5, 3.5) {};
        \node at (4.25, 0.75) {\(P\)};
    \end{tikzpicture}
    \caption{The skew-shape $6533/211$ and its associated poset.}
    \label{fig:skew}
\end{figure}
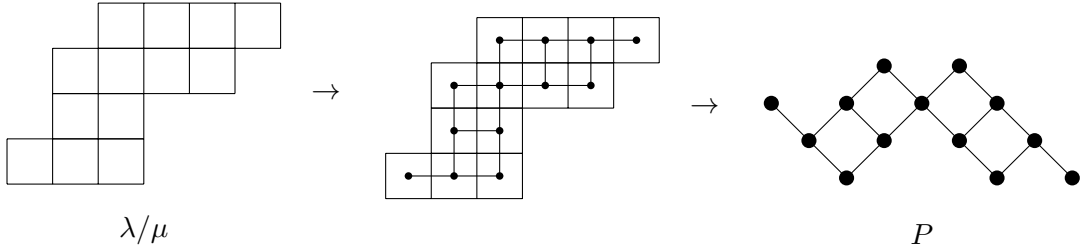

Their result implies Ehrhart positivity of the order polytope of such posets. A key ingredient of their proof is the following reduction to the nonnegativity of the linear term for families of posets that are closed under taking filters and ideals, together with a combinatorial formula for the linear term for skew-shaped posets~\cite[Proposition 4.1]{skewshapes}.

\begin{thm}[{\cite[Theorem 3.4]{skewshapes}}]\label{thm:orderpolynomialslinearterm}
    Let $\mathcal{F}$ be a family of posets closed under taking ideals and filters, and such that for every $P\in \mathcal{F}$ the linear term of $\Omega _P(n)$ is nonnegative. Then $\Omega _P(n)$ has nonnegative coefficients for any $P\in \mathcal{F}$.
\end{thm}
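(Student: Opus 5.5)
Our plan is to prove the statement by strong induction on $|P|$, using a recursion that expresses the derivative of $\Omega_P$ through order polynomials of ideals and filters of $P$. The base case is $P=\emptyset$, where $\Omega_P=1$.

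\textbf{Step 1: a Vandermonde-type decomposition.} Fix $n\ge 1$. An order preserving map $f\colon P\to[n+m]$ splits uniquely into $I=f^{-1}([n])$ and $J=P\setminus I$. Here $I$ is an ideal of $P$, $f|_I$ is an order preserving map into $[n]$, and $f|_J-n$ is an order preserving map into $[m]$. Hence
\[
\Omega_P(n+m)\;=\;\sum_{I\subseteq P \text{ ideal}} \Omega_I(n)\,\Omega_{P\setminus I}(m),
\]
and $P\setminus I$ is a filter. Both sides are polynomials, so the identity holds for all $n$ and $m$.

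\textbf{Step 2: differentiate in $m$ at $m=0$.} The order polynomial $\Omega_Q$ of a nonempty poset $Q$ vanishes at $0$. The term with $I=P$ contributes $\Omega_P(n)\cdot\Omega_\emptyset(m)=\Omega_P(n)$, whose derivative in $m$ is zero. Differentiating gives
\[
\Omega_P'(n)\;=\;\sum_{I\subsetneq P \text{ ideal}} \Omega_I(n)\,\Omega_{P\setminus I}'(0).
\]
Here $\Omega_{P\setminus I}'(0)$ is precisely the linear coefficient of $\Omega_{P\setminus I}$.
- Each filter $P\setminus I$ lies in $\mathcal F$, so this linear coefficient is nonnegative by hypothesis.
- Each proper ideal $I$ lies in $\mathcal F$ and has $|I|<|P|$, so $\Omega_I$ has nonnegative coefficients by induction.

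Therefore $\Omega_P'$ has nonnegative coefficients, which handles every coefficient of $\Omega_P$ of degree at least $1$.

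\textbf{Step 3: the constant term.} The constant coefficient of $\Omega_P$ is $\Omega_P(0)$. This is $0$ for nonempty $P$ and $1$ for $P=\emptyset$, so it is nonnegative in either case. Combining Steps 2 and 3, all coefficients of $\Omega_P$ are nonnegative, completing the induction.

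\textbf{Main obstacle.} The delicate point is justifying $\Omega_Q(0)=0$ for nonempty $Q$ and extending the splitting identity to all integer arguments. We handle this by polynomial identity: for nonempty $Q$ and $n\ge1$, $\Omega_Q(n)=\sum_k e_k\binom{n}{k}$ with $k\ge 1$, where $e_k$ counts surjective order preserving maps onto $[k]$. Every term vanishes at $n=0$. Both sides of the splitting identity are polynomials in $(n,m)$ that agree on all positive integers, so they agree identically.
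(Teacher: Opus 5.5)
Your proof is correct and complete: the splitting $\Omega_P(n+m)=\sum_{I}\Omega_I(n)\,\Omega_{P\setminus I}(m)$ over order ideals $I$, differentiation at $m=0$ to get $\Omega_P'(n)=\sum_{I\subsetneq P}\Omega_I(n)\,\Omega_{P\setminus I}'(0)$, strong induction on $|P|$, and $\Omega_P(0)=0$ for the constant term together form the standard argument for this statement. The paper gives no proof of its own here; it imports the result from Ferroni--Morales--Panova, whose argument follows these same lines, and the same idea of decomposing by chains of ideals is what the paper generalizes in Proposition~\ref{prop:productformula}.
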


In this note, we apply these results and obtain a multivariate version for counting integer-valued extensions of order preserving maps. Those correspond to lattice points in marked order polytopes. Given a pair of finite posets $A\subseteq P$ and an order preserving map $\l \colon A \rightarrow \mathbb{Z}$, the marked order polyhedron
\[
\mathcal{O}_{P,A}(\l) \ = \ \{\hat{\l} \colon P\rightarrow \mathbb{R} \text{ order preserving and } \hat{\l}(a)=\l(a) \text{ for all }a \in A\} \subset \mathbb{R}^P
\]
is the set of all real-valued order preserving extensions of $\l$ from $A$ to $P$. If $A$ includes the minima and maxima of $P$, $\min (P)\cup \max (P)\subseteq A$, then $\mathcal{O}_{P,A}(\l)$ is a bounded lattice polytope and called a marked order polytope. Marked order polytopes were introduced by Ardila-Bliem-Salazar~\cite{ArdilaBliemSalazar} and contain many interesting classes of polytopes arising in combinatorics and representation theory. Examples include the Pitman-Stanley polytopes~\cite{PitmanStanley} and Gelfand-Tsetlin~\cite{GelfandTsetlinPolytopes} polytopes.

In~\cite{JochemkoSanyal} the first author together with Sanyal studied enumerative and geometric properties of marked order polytopes. The geometry of the marked order polytope $\mathcal{O}_{P,A}(\l)$ as well as the function enumerating lattice points in this polytope is governed by the order on $A$ induced by the values attained by $\l$. The order cone of the induced poset on $A\subseteq P$ is the cone $\mathcal{L}(A)=\{\l \colon A\rightarrow \mathbb{R}\colon \text{ order preserving}\}$ consisting of all order preserving maps on $A$. In particular, $\mathcal{O}_{P,A}(\l)$ is the empty polytope for all $\l$ outside of $\mathcal{L}(A)$. For order preserving $\l$ the following structural result on the counting function of order preserving maps was proved in~\cite{JochemkoSanyal}.

\begin{thm}[{\cite[Theorem 2.6]{JochemkoSanyal}}]\label{thm:piecew_poly}
    Let $A \subseteq P$ be a pair of posets with $\min(P) \cup \max(P)
    \subseteq A$. For integral-valued order preserving maps $\l : A \rightarrow \Z$, the function
    \[
        \Omega_{P,A}(\l) \ = \ |\ord_{P,A}(\l) \cap \Z^P|
    \]
    is a piecewise polynomial over the order cone $\ordC(A)$. 
\end{thm}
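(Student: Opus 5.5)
We prove that $\Omega_{P,A}$ agrees with a polynomial on each cell of a polyhedral subdivision of $\ordC(A)$. The idea is to partition $\ordC(A)$ according to the relative order of the values of $\l$, and on each piece to count lattice points by summing over linear extensions of a refined poset.

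\textbf{Step 1: the subdivision.} For $\l\in\ordC(A)$, the induced total preorder on $A$ records which of $\l(a)\le \l(b)$ and $\l(a)=\l(b)$ hold. We subdivide $\ordC(A)$ by the braid arrangement restricted to $\R^A$, that is, by all hyperplanes $\l(a)=\l(b)$ for $a,b\in A$. Its relatively open cells $C$ correspond to ordered set partitions $A=B_1\sqcup\dots\sqcup B_k$ compatible with the order on $A$. On such a cell, $\l$ is constant on each block $B_i$ with value $t_i$, and $t_1<\dots<t_k$. Since $\min(P)\cup\max(P)\subseteq A$, every $p\in P$ satisfies $t_1\le \hat\l(p)\le t_k$ for any extension $\hat\l$, so the polytope is bounded.

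\textbf{Step 2: polynomiality on a fixed cell.} Fix such a cell. Each $p\in P\setminus A$ has lower bound $\max\{\l(a): a\in A,\ a\preceq p\}=t_{\ell(p)}$ and upper bound $\min\{\l(a): a\in A,\ a\succeq p\}=t_{u(p)}$, where the indices $\ell(p)$ and $u(p)$ depend only on the cell. An integral extension is then an integer labeling of $P\setminus A$ that is order preserving, with $t_{\ell(p)}\le\hat\l(p)\le t_{u(p)}$.

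We refine the count by recording, for each free element $p$, either that $\hat\l(p)=t_j$ for some $j$, or that $t_j<\hat\l(p)<t_{j+1}$. There are finitely many such combinatorial types. Within one type, the elements lying in the open interval $(t_j,t_{j+1})$ form a subposet $Q_j$. The number of strictly-bounded order preserving integer maps $Q_j\to\{t_j+1,\dots,t_{j+1}-1\}$ equals the order polynomial $\Omega_{Q_j}(t_{j+1}-t_j-1)$. This value is polynomial in $t_{j+1}-t_j$ for $t_{j+1}-t_j\ge1$, which holds on the open cell, and order polynomials still give the correct count when the chain is empty. The constraints between elements in different intervals, or between free elements and elements fixed at some $t_j$, are automatically satisfied or violated according to the type alone. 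Hence the count for a type is either $0$ or the product $\prod_j \Omega_{Q_j}(t_{j+1}-t_j-1)$, which is a polynomial in $\l$. Summing over types gives a polynomial on $C\cap\Z^A$.

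\textbf{Step 3: closed cells and the main obstacle.} The main obstacle is to obtain the statement with polynomials on closed maximal cells, the usual meaning of ``piecewise polynomial,'' rather than on open cells. For this I would argue that the polynomial on a maximal open cell extends correctly to its boundary. On a boundary face some $t_j$ coincide. In the count above, an interval of length $0$ contributes $\Omega_{Q_j}(-1)$. By Stanley's reciprocity, $\Omega_{Q}(-1)=(-1)^{|Q|}\Omega^\circ_Q(1)$, and this vanishes unless $Q=\emptyset$. So types that put elements strictly inside a collapsed interval contribute zero. The remaining types merge bijectively with the types of the lower-dimensional cell, and elements fixed at equal values are identified consistently.

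Alternatively, and more cleanly, I would realize $\ord_{P,A}(\l)$ as a fiber of the projection of the polyhedral cone $\{\hat\l\colon P\to\R \text{ order preserving}\}$ onto $\R^A$. I would then invoke the standard result that lattice-point counts in fibers of a rational polyhedral projection are piecewise quasi-polynomial on the chambers of the chamber complex. Here the chambers are refined by the braid arrangement, and unimodularity (the constraint matrix is totally unimodular, being a network matrix) upgrades quasi-polynomial to polynomial.
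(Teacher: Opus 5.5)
Your Steps 1 and 2 are correct. They show that $\Omega_{P,A}$ agrees with a polynomial on each \emph{relatively open} cell of the braid subdivision of $\ordC(A)$. The paper, however, uses the stronger form: a single polynomial on each \emph{closed} cone $\{\l(a_0)\le\cdots\le\l(a_r)\}$. Theorem~\ref{thm:posetfamilies} relies on this, as does the extension step in the proof of Proposition~\ref{prop:productformula}. Your Step 3 argument for the closed form is wrong on both counts.

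First, reciprocity gives $\Omega_Q(-1)=(-1)^{|Q|}\Omega^\circ_Q(1)$, and $\Omega^\circ_Q(1)=1$ for every antichain $Q$. So a collapsed interval containing a nonempty antichain contributes $\pm 1$, not $0$.

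Second, the endpoint types do not merge bijectively. The type with $p$ at $t_j$ and the type with $p$ at $t_{j+1}$ become the same configuration once $t_j=t_{j+1}$.

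Already for $P=\{a_0\prec p\prec a_1\}$, $A=\{a_0,a_1\}$ and $t=\l(a_1)-\l(a_0)$, your open-cell polynomial is $1+1+(t-1)$. At $t=0$ the two endpoint types double count the unique extension, and the interior type contributes $-1$. The polynomial does extend correctly, but only through a cancellation your argument does not capture. In general you would need a sign-reversing involution or a different decomposition.

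Your fallback via the chamber complex can be made to work, but as written it only names a theorem. You would have to:
\begin{itemize}
\item cast the count as a vector partition function with a totally unimodular matrix;
\item invoke a version of the result valid on \emph{closed} chambers (Sturmfels, Brion--Vergne);
\item check that each braid cell lies in a single cell of the chamber complex. Projections of faces of the order cone are cut out by conditions $\l(a)\le\l(b)$ and $\l(a)=\l(b)$, which is what makes this true.
\end{itemize}
On that route, Steps 1--2 play no role.

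The paper does not reprove the statement; it cites it and points to a route that avoids the boundary issue altogether. Take a natural labeling and $t_i=\l(a_i)-\l(a_{i-1})\ge 0$. Clamping an extension $\hat\l$ via $\min(\max(\hat\l-\l(a_{i-1}),0),t_i)$ gives the Minkowski decomposition $\ord_{P,A}(\l)=\l(a_0)\mathbf{1}+\sum_{i=1}^r t_i\,\ord_{P,A}(\chi_i)$, where $\chi_i$ is the indicator of $\{a_i,\dots,a_r\}$. This holds on the whole closed cone, including $t_i=0$. McMullen's theorem on lattice points in $\sum_i t_iQ_i$, for lattice polytopes $Q_i$, then gives a polynomial in $t_1,\dots,t_r$ directly.

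Relatedly, grouping free elements by half-open intervals $(\l(a_{i-1}),\l(a_i)]$, as in Proposition~\ref{prop:productformula}, produces factors $\Omega_Q(t_i)$. These do vanish at $t_i=0$ for $Q\ne\emptyset$, which is the vanishing you were after. Matching the surviving terms with boundary points would still need its own argument.
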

More precisely, for any natural labeling of the elements in $A=\{a_0,\ldots, a_r\}$ , i.e., $i<j$ whenever $a_i\prec _P a_j$, $\Omega_{P,A}(\l)$ agrees with a polynomial on $\{\l \in \mathbb{R}^A \colon \l(a_0) \leq \l (a_1)\leq \l (a_2)\leq \cdots \leq \l (a_r)\}$. In this case, $\Omega_{P,A}(\l)$ is a weighted Minkowski sum of order polytopes and the polynomial can be expressed in the variables $t_i=\l (a_i)-\l(a_{i-1})$, $i=1,\ldots r$~\cite{PitmanStanley,StanleyCombAppl,JochemkoSanyal}. For the family of marked order polytopes it is thus natural to study positivity questions for the coefficients of these multivariate counting functions.

We have the following multivariate Ehrhart positivity criterion for marked order polytopes, extending Theorem~\ref{thm:orderpolynomialslinearterm}.

\begin{thm}\label{thm:posetfamilies}
Let $\mathcal{F}$ be a family of posets closed under taking ideals and filters, such that for every $P\in \mathcal{F}$ the linear term of $\Omega _P (n)$ is nonnegative. Then for any subposet $A\subseteq P$, $\min P\cup \max P \subseteq A$, and any natural labeling $A=\{a_0,\ldots, a_r\}$, there is a polynomial $f\in \mathbb{Q}[t_1,\ldots, t_r]$ with nonnegative coefficients, such that
\[
\Omega_{P,A}(\l) \ = \ f(t_1,\ldots, t_r)
\]
for all $\l$ such that $\l (a_0)\leq \l (a_1)\leq \cdots \leq \l (a_r)$ where $t_i=\l (a_i)-\l(a_{i-1})$, $i=1,\ldots, r$.
\end{thm}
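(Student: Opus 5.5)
We prove the statement by decomposing the count on the chamber $\l(a_0)\le\cdots\le\l(a_r)$ into strata, and showing that each stratum contributes a product of nonnegative polynomials in the $t_i$.

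\textbf{Decomposition.} Fix $\l$ in the chamber and an integral extension $\hat\l$. For each $p\in P\setminus A$ there is a unique $i\in\{1,\ldots,r\}$ with $\l(a_{i-1})<\hat\l(p)\le\l(a_i)$; values below $\l(a_0)$ are impossible because $\min P\subseteq A$. Record this as a set partition $P\setminus A=B_1\sqcup\cdots\sqcup B_r$. For $\hat\l$ to be order preserving, the map $p\mapsto i$ must itself be order preserving. Moreover $p\in B_i$ is compatible with the marked elements exactly when
\begin{itemize}
\item[(a)] every $a_j\preceq p$ has $j\le i$, with the strict bound $j\le i-1$ being automatic unless $j=i$;
\item[(b)] every $a_j\succeq p$ has $j\ge i$.
\end{itemize}
Given such a valid assignment, the values on $B_i$ are independent across $i$. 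On $B_i$ they form an order preserving map into the half-open interval $(\l(a_{i-1}),\l(a_i)]$, which has $t_i$ integers. The only extra constraint is that elements of $B_i$ lying above $a_i$ in $P$ must take the value $\l(a_i)$. Hence
\[
\Omega_{P,A}(\l)=\sum_{(B_i)}\prod_{i=1}^r g_i(t_i),
\]
where $g_i(t)$ counts order preserving maps from $B_i'$ to $[t]$ with a specified subset forced to the top value. Here $B_i'$ is obtained from $B_i$ by deleting the elements comparable to $a_i$ from above, since those are forced to equal $\l(a_i)$. In other words, $g_i(t)=\Omega_{Q_i}(t)$ for the poset $Q_i=B_i\setminus U_i$, where $U_i=\{p\in B_i: p\succeq a_i\}$.

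\textbf{Forced-top elements.} We must check that the elements of $U_i$ impose no further condition on the rest of $B_i$. Since $U_i$ is closed upward within $B_i$, any $q\in B_i\setminus U_i$ lying below an element of $U_i$ only needs $\hat\l(q)\le\l(a_i)$, which holds automatically. Elements above $U_i$ within $B_i$ are themselves in $U_i$. So the factorization holds.

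\textbf{Membership in $\mathcal{F}$.} By the validity conditions, $B_i$ is convex in $P$: it is the preimage of a single value under an order preserving map. So $B_i$ is an intersection of a filter and an ideal, and removing the upward-closed set $U_i$ keeps $Q_i$ of this form. Closure of $\mathcal{F}$ under ideals and filters then gives $Q_i\in\mathcal{F}$, because an ideal of a filter is a filter's ideal. Theorem~\ref{thm:orderpolynomialslinearterm} now shows that $\Omega_{Q_i}(t)$ has nonnegative coefficients in $t$. The empty poset gives the constant $1$.

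\textbf{Conclusion.} Each summand is a product of univariate polynomials with nonnegative coefficients in the separate variables $t_i$. The index set of partitions does not depend on $\l$ within the chamber; this holds even when some $t_i=0$, since then $\Omega_{Q_i}(0)=0$ for nonempty $Q_i$, so the formula remains correct at the chamber boundary. Therefore $f=\sum\prod_i\Omega_{Q_i}(t_i)$ has nonnegative coefficients.

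\textbf{Main obstacle.} The hardest step is the careful bookkeeping of the half-open intervals and the marked elements. One must make sure that strict versus weak inequalities at the values $\l(a_i)$ match the convention $\Omega_Q(t)=\#\{Q\to[t]\}$, so that no correction terms appear. One must also verify that the $Q_i$ are convex subposets, hence lie in $\mathcal{F}$. If the convention mismatch turns out to be a real issue, the fallback is to use intervals $[\l(a_{i-1}),\l(a_i))$ instead, which only swaps filters and ideals.
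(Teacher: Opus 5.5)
Your decomposition is the paper's. Your blocks $B_i\cup\{a_i\}$ are the differences $I_i\setminus I_{i-1}$ of the ideals $I_i=\{p:\hat\lambda(p)\le\lambda(a_i)\}$, your $Q_i=B_i\setminus\uparrow a_i$ is the paper's $I_i\setminus(I_{i-1}\cup\uparrow a_i)$, and the membership and conclusion steps are the same.

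\textbf{The displayed identity undercounts.} You claim every $p\in P\setminus A$ satisfies $\lambda(a_{i-1})<\hat\lambda(p)\le\lambda(a_i)$ for some $i\ge 1$, but you only excluded values \emph{below} $\lambda(a_0)$. The value $\hat\lambda(p)=\lambda(a_0)$ is possible and lands in no block. For the chain $a_0\prec p\prec a_1$ with $A=\{a_0,a_1\}$, your formula gives $\Omega_{\{p\}}(t_1)=t_1$, whereas $\Omega_{P,A}(\lambda)=t_1+1$. The repair is to let the block index run over $\{0,\dots,r\}$. The extra block $B_0=\{p:\hat\lambda(p)=\lambda(a_0)\}$ has forced values and contributes the factor $1$; it is the paper's ideal $I_0$. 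Your fallback with intervals $[\lambda(a_{i-1}),\lambda(a_i))$ has the mirror-image defect: it loses the value $\lambda(a_r)$.

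\textbf{The boundary argument does not work.} When $t_i=0$, any summand with $B_i\neq\emptyset$ but $Q_i=\emptyset$ (all of $B_i$ lies above $a_i$) has factor $\Omega_{\emptyset}(0)=1$ and survives. Yet your value rule never produces a nonempty $B_i$ in that case. Conversely, the value rule can leave your index set. Take the chain $a_0\prec a_1\prec p\prec a_2$ with $t_1=0$ and $\hat\lambda(p)=\lambda(a_1)=\lambda(a_0)$. The rule puts $p$ in block $0$, violating your condition (a) since $a_1\preceq p$; the formula instead counts this extension through the summand $B_1=U_1=\{p\}$. So your map is not a bijection on the boundary, and ``$\Omega_{Q_i}(0)=0$ for nonempty $Q_i$'' does not settle it.

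The totals do agree, and there are two ways to prove it:
\begin{itemize}
\item The paper proves the identity only for positive integers $t_i$. It then uses that $\Omega_{P,A}$ agrees with a polynomial on the \emph{closed} chamber (Theorem~\ref{thm:piecew_poly} and the remark after it).
\item For a direct bijection, assign $p$ to $\phi(p)=\max\bigl(\min\{i:\hat\lambda(p)\le\lambda(a_i)\},\max\{j:a_j\preceq p\}\bigr)$. This $\phi$ is order preserving and satisfies $\phi(a_j)=j$. Whenever $\phi(p)=i\ge 1$, either $p\succeq a_i$ and $\hat\lambda(p)=\lambda(a_i)$, or $p\not\succeq a_i$ and $\lambda(a_{i-1})<\hat\lambda(p)\le\lambda(a_i)$. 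Hence the decomposition is bijective on the whole closed chamber.
\end{itemize}

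A minor point: $B_i$ itself need not be convex once $a_i$ is removed. However, $Q_i=\phi^{-1}(i)\setminus\uparrow a_i$ is an intersection of an ideal and a filter of $P$, which is all you need for $Q_i\in\mathcal{F}$.
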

Since the $n$-th dilate of $\mathcal{O}_{P,A}(\l)$ equals $n\mathcal{O}_{P,A}(\l)=\mathcal{O}_{P,A}(n\l)$, Theorem~\ref{thm:posetfamilies} also immediately implies Ehrhart positivity in the usual, univariate sense.
\begin{thm}\label{thm:posetfamiliesunivariate}
Let $\mathcal{F}$ be a family of posets closed under taking ideals and filters, such that for every $P\in \mathcal{F}$ the linear term of $\Omega _P (n)$ is nonnegative. Then for any subposet $A\subseteq P$, $\min P\cup \max P \subseteq A$, and any $\l \colon A\rightarrow \mathbb{Z}$, $\mathcal{O}_{P,A}(\l)$ is Ehrhart positive.
\end{thm}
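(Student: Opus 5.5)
This is a direct consequence of \Cref{thm:posetfamilies}, obtained by specialising the multivariate polynomial along a ray. Let $\l\colon A\to\Z$ be given.

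First, if $\l$ is not order preserving, then $\mathcal{O}_{P,A}(\l)$ is empty. So is every dilate, since $n\l$ is not order preserving either. The Ehrhart polynomial is then identically zero and trivially has nonnegative coefficients.

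Now assume $\l\in\ordC(A)$. Choose a natural labeling $A=\{a_0,\ldots,a_r\}$ that is compatible with the values of $\l$, so that $\l(a_0)\le\l(a_1)\le\cdots\le\l(a_r)$. Such a labeling exists: sort the elements of $A$ by their value under $\l$, and break ties using a linear extension of the induced order on $A$. Because $\l$ is order preserving, $a_i\prec_P a_j$ forces $\l(a_i)\le\l(a_j)$, and when the values are equal the tie-break respects $\prec_P$. Hence the sorted order is a natural labeling.

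For every integer $n\ge0$ the map $n\l$ satisfies the same chain of inequalities. By \Cref{thm:posetfamilies} there is therefore a polynomial $f$ with nonnegative coefficients such that
\[
E_{\mathcal{O}_{P,A}(\l)}(n) \;=\; |n\mathcal{O}_{P,A}(\l)\cap\Z^P| \;=\; \Omega_{P,A}(n\l) \;=\; f(nt_1,\ldots,nt_r),
\]
where $t_i=\l(a_i)-\l(a_{i-1})\ge0$. Here we use $n\mathcal{O}_{P,A}(\l)=\mathcal{O}_{P,A}(n\l)$, and that $\mathcal{O}_{P,A}(\l)$ is a lattice polytope because $\min P\cup\max P\subseteq A$ and $\l$ is integral.

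Expanding $f(nt_1,\ldots,nt_r)$, each monomial $c_\alpha t^\alpha$ with $c_\alpha\ge0$ contributes $c_\alpha t^\alpha n^{|\alpha|}$. Since $t^\alpha\ge0$, this contribution is a nonnegative multiple of $n^{|\alpha|}$. Summing over $\alpha$ gives a polynomial in $n$ with nonnegative coefficients. It agrees with the Ehrhart polynomial for all $n\ge0$, so it is the Ehrhart polynomial.

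The only step needing care is the existence of a natural labeling compatible with $\l$. Sorting by value with a linear-extension tie-break handles it.
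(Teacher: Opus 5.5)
Your proof is correct and follows the paper's argument. The paper deduces this statement in one line from \Cref{thm:posetfamilies} via $n\mathcal{O}_{P,A}(\l)=\mathcal{O}_{P,A}(n\l)$, so the Ehrhart polynomial is $f(nt_1,\ldots,nt_r)$ with $t_i\ge 0$. You have filled in details the paper leaves implicit: the empty case, and a natural labeling with $\l(a_0)\le\cdots\le\l(a_r)$, obtained by sorting by value with a linear-extension tie-break.
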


Theorem~\ref{thm:posetfamiliesunivariate} in turn immediately implies that marked order polytopes on skew shapes are Ehrhart positive, using the result by Ferroni-Morales-Panova~\cite[Proposition 4.1]{skewshapes} that the linear term of order polynomials of skew shapes is nonnegative.
\begin{thm}
    Let $P$ be a skew-shape and $A\subseteq P$ be a subset containing the minima and maxima of $P$, and $\l \colon A\rightarrow \mathbb{Z}$. Then the marked order polytope $\Omega_{P,A}(\l)$ is Ehrhart positive.
\end{thm}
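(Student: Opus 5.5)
This statement follows from \Cref{thm:posetfamiliesunivariate} once we know that the posets of skew shapes form a family of the required kind. Let $\mathcal{F}$ be the family of all posets $P$ associated to skew shapes $\l/\mu$, including disjoint unions of such posets (that is, posets of disconnected skew shapes, which are again skew shapes).

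The first step is to check that $\mathcal{F}$ is closed under taking ideals and filters. In the poset of $\l/\mu$ the covering relations are $(i+1,j)\prec_P(i,j)$ and $(i,j+1)\prec_P(i,j)$. Take a filter $F$, i.e.\ a set that is upward closed. If a cell $(i,j)$ lies in $F$, then so does every cell $(i',j')$ of $\l/\mu$ with $i'\le i$ and $j'\le j$. Such a set is again of the form $\l'/\mu$ for a partition $\l'\subseteq\l$: row by row it is an initial segment of the original row, and the row ends weakly decrease, so it is a skew shape. Dually, an ideal is obtained by enlarging $\mu$ to some $\mu'$ with $\mu\subseteq\mu'\subseteq\l$, giving the skew shape $\l/\mu'$. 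In both cases the induced order on the subset agrees with the skew-shape order of the new shape. The reason is that order relations in a skew shape are exactly the componentwise comparisons between cells, and the new shape is a convex subset. Hence the induced subposet equals the poset of the new shape.

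The second step is to invoke \cite[Proposition 4.1]{skewshapes}, which states that the linear term of $\Omega_P(n)$ is nonnegative for every skew-shaped poset $P$. So every member of $\mathcal{F}$ satisfies the hypothesis. Applying \Cref{thm:posetfamiliesunivariate} to any $P\in\mathcal{F}$, any $A\supseteq\min P\cup\max P$, and any $\l\colon A\to\Z$ then yields that $\mathcal{O}_{P,A}(\l)$ is Ehrhart positive. This is the claim; the statement's "$\Omega_{P,A}(\l)$" should be read as the polytope $\mathcal{O}_{P,A}(\l)$. If $\l$ is not order preserving, the polytope is empty, and the statement holds trivially.

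The only real point to verify is the closure claim, in particular the convexity argument showing that the induced order on a filter or ideal is the skew-shape order rather than something weaker. I expect this step to be routine. The one thing to watch is whether the cited proposition of Ferroni--Morales--Panova covers disconnected skew shapes. If it does not, I would note that $\Omega_P$ is multiplicative over connected components. The linear coefficient of a product of polynomials that all vanish at $0$ is then zero, or is that component's own linear coefficient when there is only a single component, so it is again nonnegative.
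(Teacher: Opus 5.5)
Your proposal is correct and is essentially the paper's argument: the paper obtains this statement directly from Theorem~\ref{thm:posetfamiliesunivariate} applied to the family of skew-shaped posets, with \cite[Proposition 4.1]{skewshapes} supplying nonnegativity of the linear term. Your explicit check that ideals and filters of skew shapes are again (possibly disconnected) skew shapes with the induced order, and your remark on disconnected shapes, fill in details the paper leaves implicit.
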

Examples that fall under this umbrella are the Pitman-Stanley polytopes, for which the Ehrhart positivity was already shown in~\cite{PitmanStanley}. Our result yields a multivariate extension of that positivity result to $m$-generalized Pitman-Stanley polytopes~\cite{mgeneralized,PitmanStanley} which enumerate plane partitions (Theorem~\ref{thm:conjmgeneralized}). Thereby we answer \cite[Conjecture 3.4]{generalizedPitmanStanleyAbstract} due to Dugan-Hegarty-Morales-Raymond in the affirmative. (See also \cite[Conjecture 9.3]{skewshapes}.) 

Another important class of marked order polytopes are Gelfand-Tsetlin polytopes~\cite{GelfandTsetlinPolytopes}. Ehrhart positivity of the (straight, non-skew-shaped) Gelfand-Tsetlin polytope follows from the hook-content formula \cite[Equation 7.106]{ec2}. We show that multivariate Ehrhart positivity also holds for Gelfand-Tsetlin polytopes $\mathcal{GT}_{\lambda/ \mu}$ of skew-shapes $\lambda/ \mu$ (Theorem~\ref{thm:conjGelfand-Tsetlin}), thereby proving a conjecture of Alexandersson-Alhajjar~\cite[Conjecture 7]{GelfandTsetlinconjecture}. Ideals and filters of the underlying poset in this case are shifted skew shapes for which Ehrhart positivity is conjectured~\cite{skewshapes} (but not proven to the best of our knowledge). Theorem~\ref{thm:posetfamilies} can thus not be applied directly. However, as we argue, slightly weaker assumptions than in Theorem~\ref{thm:posetfamilies} are sufficient to obtain Ehrhart positivity in this particular case.

We conclude by remarking that the statements on multivariate Ehrhart positivity for marked order polytopes presented in this note carry over verbatim to marked chain polytopes~\cite{ArdilaBliemSalazar} and more general marked chain-order polytopes~\cite{markedchainorder} on the same underlying poset, as their lattice points are in bijection.

\section{Proof of Theorem~\ref{thm:posetfamilies}}

The key ingredient of our argument is the following expression of $\Omega _{P,A}(\l)$ in terms of order preserving maps of subposets, in the spirit of~\cite[Theorem 2.6]{SWZ}. Recall that a subset $I\subseteq P$ is called an ideal if $I$ is downward closed, i.e., $p\in I$ implies $q\in I$ for all $q\prec _P p$. Similarly, a subset $I\subseteq P$ is called a filter if it is upward closed, equivalently, if $P\setminus I$ is an ideal. For all $p\in P$, let $\uparrow p$ denote the filter $\{q\in P\colon p\preceq _P q\}$.
\begin{prop}\label{prop:productformula}
Let $A\subseteq P$ be a pair of posets with $\min P \cup \max P\subseteq A$ and  natural labeling $A=\{a_0,\ldots, a_r\}$. Then for $\l (a_i) =t_0+t_1+\cdots + t_i$, $i=0,\ldots, r$, we have the identity of polynomials
\[
\Omega _{P,A}(\l) \ = \ \sum _{I_0 \subset I_1 \subset \cdots \subset I_r\atop a_i \in I_i} \prod _{i=1}^r \Omega _{I_i\setminus (I_{i-1}\cup \uparrow a_i)}(t_i)
\]
where the sum is over all strict chains $I_0 \subset I_1 \subset \cdots \subset I_r$ of order ideals of $P$ such that $a_i\in I_i\setminus I_{i-1}$ for all $i=0,\ldots, r$ and $I_{-1}:=\emptyset$.
\end{prop}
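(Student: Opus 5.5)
The plan is to prove the identity first for all nonnegative integers $t_0\in\Z$ and $t_1,\ldots,t_r\in\Z_{\geq 0}$, by a direct bijection, and then extend it to an identity of polynomials. Both sides are polynomials in $t_1,\ldots,t_r$: the left side by Theorem~\ref{thm:piecew_poly} and the remark following it, the right side since each $\Omega_Q$ is a polynomial. Here $\Omega_\emptyset=1$, and for nonempty $Q$ we use $\Omega_Q(0)=0$, which holds both as a count and for the order polynomial. Two polynomials that agree on $\Z_{\geq 0}^r$ are equal. Also, $\Omega_{P,A}$ is translation invariant in $t_0$, so the dependence on $t_0$ is harmless.

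For the bijection, fix $\l$ with $\l(a_0)\leq\cdots\leq\l(a_r)$ and an integral order preserving extension $\hat\l$. Define
\[
I_i \ = \ \{p\in P \colon \hat\l(p)\leq \l(a_i) \text{ and } p\not\succeq a_j \text{ for all } j>i\}, \qquad i=0,\ldots,r.
\]
The steps to check are the following.
\begin{itemize}
\item Each $I_i$ is an ideal, since both defining conditions are inherited downward, and $I_{i-1}\subseteq I_i$.
\item $a_i\in I_i$, because $a_i\succeq a_j$ with $j>i$ would contradict the natural labeling.
\item $a_i\notin I_{i-1}$, because $a_i\succeq a_i$. This also gives strictness of the chain.
\item $I_r=P$. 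Every $p$ lies below some maximum, and the maxima lie in $A$, so $\hat\l(p)\leq \l(a_r)$.
\end{itemize}

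Next I would analyse the blocks $I_i\setminus I_{i-1}$ for $i\geq 1$. An element $p$ of this block lies either in $\uparrow a_i$ or not.
\begin{itemize}
\item If $p\in\uparrow a_i$, then $\l(a_i)\leq\hat\l(p)\leq\l(a_i)$, so its value is forced to be $\l(a_i)$.
\item If $p\notin\uparrow a_i$, then $p$ dropped out of $I_{i-1}$ only because $\hat\l(p)>\l(a_{i-1})$. Indeed, $p\succeq a_j$ with $j>i$ is excluded by $p\in I_i$.
\end{itemize}
So on $Q_i:=I_i\setminus(I_{i-1}\cup\uparrow a_i)$ the map $p\mapsto \hat\l(p)-\l(a_{i-1})$ takes values in $[t_i]$ and is order preserving, which gives a point counted by $\Omega_{Q_i}(t_i)$. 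The block $I_0$ is handled the same way: since the minima lie in $A$, every element of $I_0$ has value exactly $\l(a_0)$. This explains why there is no factor for $i=0$.

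Conversely, given a chain and order preserving maps $g_i\colon Q_i\to[t_i]$, the inverse sets
\[
\hat\l \ = \ \l(a_0) \text{ on } I_0, \qquad \hat\l \ = \ \l(a_i) \text{ on } (I_i\setminus I_{i-1})\cap\uparrow a_i, \qquad \hat\l \ = \ \l(a_{i-1})+g_i \text{ on } Q_i .
\]

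The main obstacle is verifying that this reconstructed $\hat\l$ is order preserving, extends $\l$, and recovers the same chain, so that the two maps are mutually inverse. For order preservation I would take $q\prec p$ with $q$ in block $i$ and $p$ in block $j$, so $i\leq j$ because the $I$'s are ideals. If $i<j$, the values are separated by the interval structure: block $i$ takes values in $[\l(a_{i-1})+1,\l(a_i)]$, while block $j$ takes values at least $\l(a_{j-1})+1$, or exactly $\l(a_j)$ on its $\uparrow a_j$ part, or $\l(a_0)$ if $j=0$. If $i=j$, either both elements lie in $Q_i$, where $g_i$ is order preserving, or $p$ lies in $\uparrow a_i$ and carries the maximal value $\l(a_i)$. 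Note that $q\in\uparrow a_i$ forces $p\in\uparrow a_i$. Each $a_k$ lies in block $k$ and in $\uparrow a_k$, so it receives the value $\l(a_k)$. Recovering the chain amounts to checking the two defining conditions of $I_k$ block by block. The condition $a_i\in I_i\setminus I_{i-1}$ guarantees that no element of $\uparrow a_j$ with $j>i$ lies in $I_i$, because $I_i$ is an ideal not containing $a_j$ for $j>i$.
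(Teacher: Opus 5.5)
Your proof is correct and is essentially the paper's bijection: a chain of ideals cut out by the values of $\hat\lambda$, with $\hat\lambda$ on each block $I_i\setminus(I_{i-1}\cup\uparrow a_i)$ shifted to an order preserving map into a $t_i$-chain. The only difference is that the paper restricts to positive $t_i$ and uses the plain level sets $I_i=\{p\colon \hat\lambda(p)\leq\lambda(a_i)\}$, whereas your extra condition $p\not\succeq a_j$ for $j>i$ breaks ties so the bijection also works when some $t_i=0$; this is not needed for the polynomial identity, but it gives the counting identity on the whole closed chamber, so the appeal to Theorem~\ref{thm:piecew_poly} becomes unnecessary.
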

\begin{proof}
    As both sides are polynomials, it is enough to prove the identity for positive integers $t_1, \ldots, t_r$. For that, we construct a bijection between the integer points in the marked order polytope $\mathcal{O}_{P,A}(\l)$ and $\bigcup _I \prod _{i=1}^r (t_i-1) \mathcal{O}_{I_i\setminus (I_{i-1}\cup \uparrow a_i)}$ where the union is over all chains of order ideals as in the assumption: any order preserving extension $\hat{\l}\in \mathcal{O}_{P,A}(\l)\cap \mathbb{Z}^P$ of $\l$ from $A$ to $P$ gives rise to a strict chain of order ideals by setting $I_i=\{p\in P\colon \hat{\l}(p)\leq t_0+t_1+\cdots +t_i=\lambda (a_i)\}$. Since the $t_i$s are positive we have $a_i\in I_i\setminus I_{i-1}$. For $i\geq 1$ and $p\in I_i\setminus I_{i-1}$, let $g(p)=\hat{\l}(p) -(t_0+t_1+\cdots +t_{i-1}+1)$. Then $0\leq g(p)\leq t_i-1$, $g(p)$ is order preserving on the subposet $I_i\setminus I_{i-1}$ and furthermore we have $g(p)=t_i-1$ for all $p\in I_i \mathop{\cap} \uparrow a_i$. Thus $g$ restricted to ${I_i\setminus (I_{i-1}\cup \uparrow a_i)}$ is an order preserving map with integer values in $[0,t_i-1]$ and $(g_{I_i\setminus (I_{i-1}\cup \uparrow a_i)})_{i=1,\ldots, r}\in \bigcup _I \prod _{i=1}^r (t_i-1) \mathcal{O}_{I_i\setminus (I_{i-1}\cup \uparrow a_i)}$ is a well-defined map from the integer points of $\mathcal{O}_{P,A}(\l)$ to integer points in $\bigcup _I \prod _{i=1}^r (t_i-1) \mathcal{O}_{I_i\setminus (I_{i-1}\cup \uparrow a_i)}$. This map is bijective and the inverse can be defined as follows: if $p\in I_0$ we set $\hat{\l}(p)=t_0$. If $p\in I_i\setminus (I_{i-1}\cup \uparrow a_i)$ then let $\hat{\l}(p)=g_{I_i\setminus (I_{i-1}\cup \uparrow a_i)}(p)+t_0+\cdots + t_{i-1}+1$. And if $p\in I_i \mathop{\cap} \uparrow a_i$ then set $\hat{\l}(p)=t_0+t_1+\cdots + t_i$.
\end{proof}

\begin{proof}[Proof of Theorem~\ref{thm:posetfamilies}]
    By Proposition~\ref{prop:productformula} we can write $\Omega _{P,A}(\l)$ as a sum of products of order polynomials of posets obtained from taking ideals and filters. These posets are by assumption also in the class $\mathcal{F}$. By the assumption on the linear term and Theorem~\ref{thm:orderpolynomialslinearterm}, all these order polynomials in the product have nonnegative coefficients and thus also $\Omega _{P,A}(\l)$ has nonnegative coefficients as a polynomial in $t_i$.
\end{proof}

\section{Applications}
\subsection{Generalized Pitman-Stanley polytopes}
Let $\bm y, \bm z \in \Z_{\geq 0}^k$ and $m \geq 1$. 
For any $i \in [k]$, set $\tilde y_i = y_1 + \cdots + y_i$ and similarly define $\tilde z_i$. 
The \emph{$m$-generalized Pitman–Stanley polytope}, denoted $\PS_k^{m}(\bm y, \bm z)$, is the set of all points $(x_{i, j}) \in \R^{km}$ that satisfy
\begin{itemize}
    \item $\tilde z_i \leq x_{i, 1} \leq x_{i, 2} \leq \cdots \leq x_{i, m} \leq \tilde y_i$ for all $i \in [k]$, and
    \item $x_{i, j} \leq x_{i + 1, j}$ for all $i \in [k - 1]$ and $j \in [m]$.
\end{itemize}
The lattice points of this polytope are in correspondence with plane partitions of the skew-shape $(\tilde y_k, \ldots, \tilde y_2, \tilde y_1)/ (\tilde z_k, \ldots, \tilde z_2, \tilde z_1)$ with entries from $\{1, 2, \ldots, m + 1\}$~\cite{PitmanStanley}. 
This can be seen by setting $x_{i, j} - \tilde z_i$ to be the number of entries from $[j]$ in the $i$-th row of the skew-shape. 
Note that for any $n \geq 1$, we have $n\PS_k^{m}(\bm y, \bm z) = \PS_k^{m}(n \bm y, n \bm z)$ and hence, we have a similar plane partition interpretation for lattice points in dilates of the polytope as well.

These polytopes were defined by Pitman-Stanley \cite[Section 5]{PitmanStanley} and studied by Dugan-Hegarty-Morales-Raymond \cite{mgeneralized}. 
The polytope $\PS_k^{m}(\bm y, \bm z)$ can be viewed as a marked order polytope where the underlying poset is a product of two chains, with $m+2$ and $k$ elements, respectively.

\begin{example}
    For $\bm y = (2, 2, 0, 3, 0)$ and $\bm z = (0, 1, 1, 2, 1)$, the marked poset associated to $\PS_5^3(\bm y, \bm z)$ is shown in \Cref{fig:PSex}.
\end{example}

\begin{figure}[H]
    \centering
    \begin{tikzpicture}[scale = 0.75, xscale = 1.4]
        \node (00) at (0, 0) {\color{red}$\tilde z_1$};
        \node (01) at (0, 1) {\color{red}$\tilde z_2$};
        \node (02) at (0, 2) {\color{red}$\tilde z_3$};
        \node (03) at (0, 3) {\color{red}$\tilde z_4$};
        \node (04) at (0, 4) {\color{red}$\tilde z_5$};

        \node (10) at (1, 0 + 0.5) {$x_{1,1}$};
        \node (11) at (1, 1 + 0.5) {$x_{2,1}$};
        \node (12) at (1, 2 + 0.5) {$x_{3,1}$};
        \node (13) at (1, 3 + 0.5) {$x_{4,1}$};
        \node (14) at (1, 4 + 0.5) {$x_{5,1}$};

        \node (20) at (2, 0 + 1) {$x_{1,2}$};
        \node (21) at (2, 1 + 1) {$x_{2,2}$};
        \node (22) at (2, 2 + 1) {$x_{3,2}$};
        \node (23) at (2, 3 + 1) {$x_{4,2}$};
        \node (24) at (2, 4 + 1) {$x_{5,2}$};

        \node (30) at (3, 0 + 1.5) {$x_{1,3}$};
        \node (31) at (3, 1 + 1.5) {$x_{2,3}$};
        \node (32) at (3, 2 + 1.5) {$x_{3,3}$};
        \node (33) at (3, 3 + 1.5) {$x_{4,3}$};
        \node (34) at (3, 4 + 1.5) {$x_{5,3}$};

        \node (40) at (4, 2) {\color{red}$\tilde y_1$};
        \node (41) at (4, 3) {\color{red}$\tilde y_2$};
        \node (42) at (4, 4) {\color{red}$\tilde y_3$};
        \node (43) at (4, 5) {\color{red}$\tilde y_4$};
        \node (44) at (4, 6) {\color{red}$\tilde y_5$};

        \node at (5.5, 3) {$\rightarrow$};
    \end{tikzpicture}
    \hspace{0.4cm}
    \begin{tikzpicture}[scale = 0.75]
        \foreach \x in {1,2,3}{
        \foreach \y in {0,...,4}{
        \node[circle, fill = black, inner sep = 2pt] (\x\y) at (\x, \y + 0.5*\x) {};
        }
        }
        \node[circle, fill = red, inner sep = 2pt, label = left:{\textcolor{red}{0}}] (00) at (0, 0) {};
        \node[circle, fill = red, inner sep = 2pt, label = left:{\textcolor{red}{1}}] (01) at (0, 1) {};
        \node[circle, fill = red, inner sep = 2pt, label = left:{\textcolor{red}{2}}] (02) at (0, 2) {};
        \node[circle, fill = red, inner sep = 2pt, label = left:{\textcolor{red}{4}}] (03) at (0, 3) {};
        \node[circle, fill = red, inner sep = 2pt, label = left:{\textcolor{red}{5}}] (04) at (0, 4) {};

        \node[circle, fill = red, inner sep = 2pt, label = right:{\textcolor{red}{2}}] (40) at (4, 2) {};
        \node[circle, fill = red, inner sep = 2pt, label = right:{\textcolor{red}{4}}] (41) at (4, 3) {};
        \node[circle, fill = red, inner sep = 2pt, label = right:{\textcolor{red}{4}}] (42) at (4, 4) {};
        \node[circle, fill = red, inner sep = 2pt, label = right:{\textcolor{red}{7}}] (43) at (4, 5) {};
        \node[circle, fill = red, inner sep = 2pt, label = right:{\textcolor{red}{7}}] (44) at (4, 6) {};
        
        \foreach \x in {0,...,4}{
        \draw[thin] (\x0) -- (\x1) -- (\x2) -- (\x3) -- (\x4);
        }
        \foreach \y in {0,...,4}{
        \draw[thin] (0\y) -- (1\y) -- (2\y) -- (3\y) -- (4\y);
        }
    \end{tikzpicture}
    \caption{Marked poset associated to a generalized Pitman-Stanley polytope.}
    \label{fig:PSex}
\end{figure}
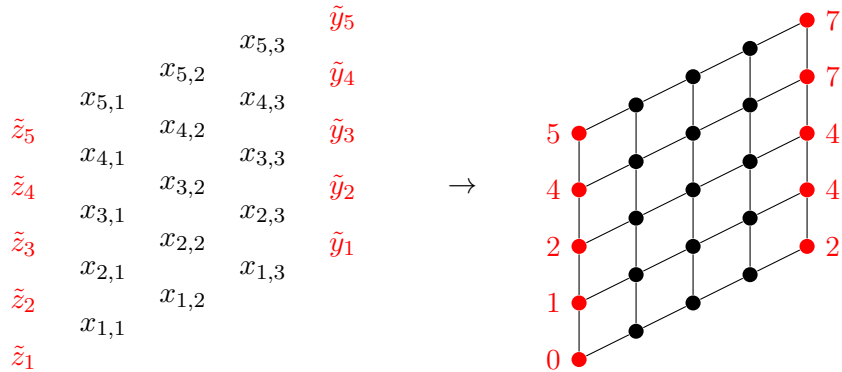

Setting $\mathcal{F}$ to be the family of cell posets associated to skew-shapes, we see that any product of two chains is in $\mathcal{F}$ and, by \cite[Proposition 4.1]{skewshapes}, this family also satisfies the conditions in \Cref{thm:posetfamilies}. 
This gives us the following.

\begin{thm}\label{thm:mgeneralizedgeneral}
    For any $\bm y, \bm z \in \Z_{\geq 0}^k$ and $m \geq 1$, the $m$-generalized Pitman-Stanley polytope $\PS_k^{m}(\bm y, \bm z)$ is Ehrhart positive.
\end{thm}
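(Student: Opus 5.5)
The plan is to realize $\PS_k^{m}(\bm y, \bm z)$ as a marked order polytope $\mathcal{O}_{P,A}(\l)$ and then invoke Theorem~\ref{thm:posetfamiliesunivariate} with $\mathcal{F}$ the family of cell posets of skew shapes.

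\textbf{The marked poset.} Take $P=[k]\times[0,m+1]$ with the product order, an element $(i,j)$ standing for $x_{i,j}$. Set $A=\{(i,0),(i,m+1)\colon i\in[k]\}$ and define $\l(i,0)=\tilde z_i$ and $\l(i,m+1)=\tilde y_i$. With these choices an order preserving extension $\hat\l$ of $\l$ is exactly a point of $\PS_k^m(\bm y,\bm z)$. The chain inequalities along each row come from the relations $(i,j)\prec(i,j+1)$, and the inequalities $x_{i,j}\le x_{i+1,j}$ come from $(i,j)\prec(i+1,j)$. The extra relations $\tilde z_i\le \tilde z_{i+1}$ and $\tilde y_i\le\tilde y_{i+1}$ that the order forces on $A$ are automatically satisfied, since $\bm y,\bm z\ge 0$. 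Hence $\l$ is order preserving on $A$, and the coordinate projection forgetting $A$ is a lattice-preserving bijection $\mathcal{O}_{P,A}(\l)\cong \PS_k^m(\bm y,\bm z)$. The poset has unique minimum $(1,0)$ and unique maximum $(k,m+1)$, and both lie in $A$, so the hypothesis $\min P\cup\max P\subseteq A$ holds. Since $n\mathcal{O}_{P,A}(\l)=\mathcal{O}_{P,A}(n\l)$ is compatible with $n\PS_k^m(\bm y,\bm z)=\PS_k^m(n\bm y,n\bm z)$, the two polytopes have the same Ehrhart polynomial.

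\textbf{Membership in $\mathcal{F}$ and the hypotheses.} A product of a $k$-chain and an $(m+2)$-chain is the cell poset of the rectangular shape $((m+2)^k)/\emptyset$, which is a skew shape. The order conventions need matching: the paper orders cells by $(i+1,j)\prec(i,j)$, whereas our poset is increasing in both coordinates. Reflecting the indices via $i\mapsto k+1-i$ and $j\mapsto m+2-j$ gives an isomorphism, and rectangles are self-dual under this rotation. The family of skew-shape posets is closed under ideals and filters, because removing an order ideal or a filter from a skew diagram again leaves a skew diagram. By \cite[Proposition 4.1]{skewshapes}, every such poset has an order polynomial with nonnegative linear term. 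Theorem~\ref{thm:posetfamiliesunivariate} therefore applies and yields Ehrhart positivity.

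\textbf{Main obstacle.} Essentially nothing here is hard. The only point needing care is checking that $\l$ is genuinely order preserving on the induced order of $A$, including the comparabilities $(i,0)\prec(i',m+1)$ for $i\le i'$. These reduce to $\tilde z_i\le \tilde y_{i'}$. This inequality is not automatic from $\bm y,\bm z\ge0$.

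If it fails, the polytope is empty. Its Ehrhart polynomial is then $0$, or rather all dilates are empty, and the statement is trivial. Equivalently, the theorem concerns only the nonempty case. I would note this case split explicitly. Otherwise $\l\in\ordC(A)$ and the argument above goes through.
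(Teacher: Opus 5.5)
Your proposal is correct and follows the paper's own argument. Like the paper, you realize $\PS_k^{m}(\bm y,\bm z)$ as a marked order polytope on the product of a $k$-chain and an $(m+2)$-chain, which is a rectangular skew shape, and apply Theorem~\ref{thm:posetfamiliesunivariate} with $\mathcal{F}$ the skew-shape posets via \cite[Proposition 4.1]{skewshapes}. Your separate handling of the empty case $\tilde z_i>\tilde y_{i'}$ is a detail the paper leaves implicit, since Theorem~\ref{thm:posetfamiliesunivariate} is stated for arbitrary $\l\colon A\to\Z$.
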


When all terms of $\bm z$ are $0$, lattice points correspond to plane partitions of the straight shape $(\tilde y_k, \ldots, \tilde y_1)$. 
For any $\bm y \in \Z_{\geq 0}^k$, we set $\PS_k^m(\bm y) := \PS_k^m(\bm y, \bm 0)$. Since $\tilde z_1 \leq \cdots \leq \tilde z_k\leq \tilde y_1\leq \cdots \leq \tilde y_k$, another consequence of \Cref{thm:posetfamilies} is the following.

\begin{thm}\label{thm:conjmgeneralized}
    For any $\bm y \in \Z_{\geq 0}^k$, we have that $|\PS_k^m(\bm y) \cap \Z^{km}|$ is a polynomial in $\mathbb{Q}[\bm y]$ with nonnegative coefficients.
\end{thm}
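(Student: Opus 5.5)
We apply \Cref{thm:posetfamilies} to the marked poset describing $\PS_k^m(\bm y)$, with $\mathcal{F}$ the family of cell posets of skew shapes. As observed before \Cref{thm:mgeneralizedgeneral}, this family is closed under ideals and filters. By \cite[Proposition 4.1]{skewshapes} the linear terms of the order polynomials of its members are nonnegative, so the hypotheses of \Cref{thm:posetfamilies} hold.

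The underlying poset $P$ is the product of a $k$-chain and an $(m+2)$-chain, which is the cell poset of a rectangle and hence lies in $\mathcal{F}$. The marked set $A$ consists of the $2k$ elements carrying the values $\tilde z_1,\dots,\tilde z_k,\tilde y_1,\dots,\tilde y_k$. These contain the unique minimum and maximum of $P$, so $\min P\cup\max P\subseteq A$.

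Next we fix the natural labeling
\[
a_0=z_1,\ \dots,\ a_{k-1}=z_k,\ a_k=y_1,\ \dots,\ a_{2k-1}=y_k.
\]
This is a natural labeling because every order relation in $A$ goes from a $z_i$ to a $z_j$ with $i<j$, from a $y_i$ to a $y_j$ with $i<j$, or from a $z$ to a $y$. With $\bm z=\bm 0$ and $\bm y\in\Z_{\geq 0}^k$ the marking values are $0,\dots,0,\tilde y_1\le\cdots\le\tilde y_k$. These are weakly increasing along the labeling, so $\l$ lies in the region where \Cref{thm:posetfamilies} applies.

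The increments along the labeling are
\[
t_1=\cdots=t_{k-1}=0,\qquad t_k=\tilde y_1-0=y_1,\qquad t_{k+i}=\tilde y_{i+1}-\tilde y_i=y_{i+1}\quad (i=1,\dots,k-1).
\]
\Cref{thm:posetfamilies} gives a polynomial $f\in\mathbb{Q}[t_1,\dots,t_{2k-1}]$ with nonnegative coefficients such that $\Omega_{P,A}(\l)=f(t_1,\dots,t_{2k-1})$. The lattice points of $\PS_k^m(\bm y)$ are exactly the integral order preserving extensions of this marking, so
\[
|\PS_k^m(\bm y)\cap\Z^{km}|=f(0,\dots,0,y_1,\dots,y_k).
\]
Substituting zeros for some variables keeps the coefficients nonnegative, and the result is a polynomial in $y_1,\dots,y_k$ valid for all $\bm y\in\Z_{\ge0}^k$.

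The main point needing care is that the formula must hold uniformly on the whole region, including degenerate cases with some $t_i=0$. This is ensured because \Cref{thm:posetfamilies} asserts the identity for all $\l$ with $\l(a_0)\le\cdots\le\l(a_r)$, not only strictly increasing ones. The rest is just checking that the labeling is natural.
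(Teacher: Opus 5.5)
Your proposal is correct and is essentially the paper's argument: the paper also applies \Cref{thm:posetfamilies} to the product-of-chains poset, with $\mathcal{F}$ the skew-shape family. It uses the natural labeling along $\tilde z_1\le\cdots\le\tilde z_k\le\tilde y_1\le\cdots\le\tilde y_k$, so that setting $\bm z=\bm 0$ makes the first $k-1$ increments vanish and the remaining ones become $y_1,\dots,y_k$; you have just written out the checks (naturality of the labeling, the increments, validity on the closed chamber) that the paper leaves implicit.
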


Thereby we answer \cite[Conjecture 3.4]{generalizedPitmanStanleyAbstract} due to Dugan-Hegarty-Morales-Raymond in the affirmative. Theorem~\ref{thm:mgeneralizedgeneral} was mentioned as a conjecture in~\cite[Conjecture 9.3]{skewshapes} attributed to an upcoming article by Dugan-Hegarty-Morales-Raymond.

\subsection{Gelfand-Tsetlin polytopes}

Let $\bm y, \bm z \in \Z_{\geq 0}^k$ and $m \geq 1$. 
For any $i \in [k]$, set $\tilde y_i = y_1 + \cdots + y_i$ and similarly define $\tilde z_i$. 
The \emph{skew Gelfand-Tsetlin polytope}, denoted $\OGT_k^m(\bm y, \bm z)$, is the set of all points $(x_{i, j}) \in \R^{km}$ that satisfy
\begin{equation*}
    x_{i, j} \leq x_{i, j + 1} \leq x_{i + 1, j}
\end{equation*}
where we set $x_{i, 0} = \tilde z_i$ and $x_{i, m + 1} = \tilde y_i$ for all $i \in [k]$. 
The lattice points of this polytope are in correspondence with semi-standard Young tableaux of skew shape $(\tilde y_k, \ldots, \tilde y_2, \tilde y_1)/ (\tilde z_k, \ldots, \tilde z_2, \tilde z_1)$ with entries from $\{1, 2, \ldots, m + 1\}$. 
This can be seen by setting $x_{i, j} - \tilde z_i$ to be the number of entries from $[j]$ in the $i$-th row of the skew-shape. 
Note that for any $n \geq 1$, we have $n\OGT_k^m(\bm y, \bm z) = \OGT_k^m(n\bm y, n\bm z)$.

Gelfand-Tsetlin patterns (for straight shapes) were introduced in \cite{GelfandTsetlinPolytopes}, and the corresponding polytopes have also been studied as marked order polytopes \cite{ArdilaBliemSalazar}. 
These polytopes can be viewed as marked order polytopes as follows. 
Given $\bm y, \bm z \in \Z_{\geq 0}^k$ and $m \geq 1$, construct the poset $P$ on $\{(i, j) \colon i \in [k], j \in [0, m + 1]\}$ with cover relations
\begin{equation*}
    (i, j) \prec_P (i, j + 1) \prec_P (i + 1, j).
\end{equation*}
Let $A = \{(i, j) \in P \colon j \in \{0, m + 1\}\}$ and set $\l : A \rightarrow \Z$ to be the map given by $\l(i, 0) = \tilde z_i$ and $\l(i, m + 1) = \tilde y_i$ for all $i \in [k]$. 
We then have $\mathcal{O}_{P,A}(\l) = \OGT_k^m(\bm y, \bm z)$.

\begin{example}
    For $\bm y = (1, 0, 1, 2)$ and $\bm z = (0, 0, 1, 0)$, the marked poset associated to $\OGT_4^2(\bm y, \bm z)$ is shown in \Cref{fig:GTex}.
\end{example}

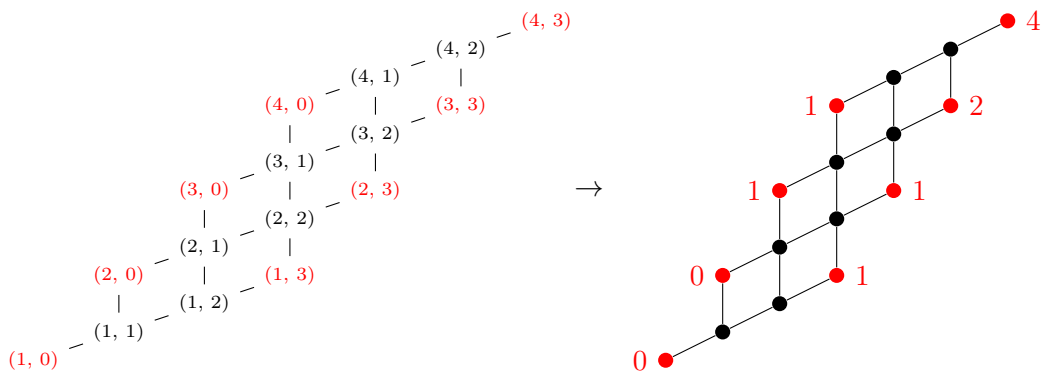
\begin{figure}[h]
    \centering
    \begin{tikzpicture}[scale = 0.75, xscale = 1.5]
        \node (00) at (0, 0) {\color{red} \tiny (1, 0)};
        \node (10) at (1, 0.5) {\tiny (1, 1)};
        \node (20) at (2, 1) {\tiny (1, 2)};
        \node (30) at (3, 1.5) {\color{red} \tiny (1, 3)};

        \node (01) at (1 + 0, 1.5 + 0) {\color{red} \tiny (2, 0)};
        \node (11) at (1 + 1, 1.5 + 0.5) {\tiny (2, 1)};
        \node (21) at (1 + 2, 1.5 + 1) {\tiny (2, 2)};
        \node (31) at (1 + 3, 1.5 + 1.5) {\color{red} \tiny (2, 3)};

        \node (02) at (2 + 0, 3 + 0) {\color{red} \tiny (3, 0)};
        \node (12) at (2 + 1, 3 + 0.5) {\tiny (3, 1)};
        \node (22) at (2 + 2, 3 + 1) {\tiny (3, 2)};
        \node (32) at (2 + 3, 3 + 1.5) {\color{red} \tiny (3, 3)};

        \node (03) at (3 + 0, 4.5 + 0) {\color{red} \tiny (4, 0)};
        \node (13) at (3 + 1, 4.5 + 0.5) {\tiny (4, 1)};
        \node (23) at (3 + 2, 4.5 + 1) {\tiny (4, 2)};
        \node (33) at (3 + 3, 4.5 + 1.5) {\color{red} \tiny (4, 3)};

        \foreach \y in {0,...,3}{
        \draw[thin] (0\y) -- (1\y) -- (2\y) -- (3\y);
        }
        \draw[thin] (10) -- (01);
        \draw[thin] (20) -- (11);
        \draw[thin] (30) -- (21);
        
        \draw[thin] (11) -- (02);
        \draw[thin] (21) -- (12);
        \draw[thin] (31) -- (22);

        \draw[thin] (12) -- (03);
        \draw[thin] (22) -- (13);
        \draw[thin] (32) -- (23);

        \node at (6.5, 3) {$\rightarrow$};
    \end{tikzpicture}
    \begin{tikzpicture}[scale = 0.75]
        \node[circle, fill = red, inner sep = 2pt, label = left:{\textcolor{red}{0}}] (00) at (0, 0) {};
        \node[circle, fill = black, inner sep = 2pt] (10) at (1, 0.5) {};
        \node[circle, fill = black, inner sep = 2pt] (20) at (2, 1) {};
        \node[circle, fill = red, inner sep = 2pt, label = right:{\textcolor{red}{1}}] (30) at (3, 1.5) {};

        \node[circle, fill = red, inner sep = 2pt, label = left:{\textcolor{red}{0}}] (01) at (1 + 0, 1.5 + 0) {};
        \node[circle, fill = black, inner sep = 2pt] (11) at (1 + 1, 1.5 + 0.5) {};
        \node[circle, fill = black, inner sep = 2pt] (21) at (1 + 2, 1.5 + 1) {};
        \node[circle, fill = red, inner sep = 2pt, label = right:{\textcolor{red}{1}}] (31) at (1 + 3, 1.5 + 1.5) {};

        \node[circle, fill = red, inner sep = 2pt, label = left:{\textcolor{red}{1}}] (02) at (2 + 0, 3 + 0) {};
        \node[circle, fill = black, inner sep = 2pt] (12) at (2 + 1, 3 + 0.5) {};
        \node[circle, fill = black, inner sep = 2pt] (22) at (2 + 2, 3 + 1) {};
        \node[circle, fill = red, inner sep = 2pt, label = right:{\textcolor{red}{2}}] (32) at (2 + 3, 3 + 1.5) {};

        \node[circle, fill = red, inner sep = 2pt, label = left:{\textcolor{red}{1}}] (03) at (3 + 0, 4.5 + 0) {};
        \node[circle, fill = black, inner sep = 2pt] (13) at (3 + 1, 4.5 + 0.5) {};
        \node[circle, fill = black, inner sep = 2pt] (23) at (3 + 2, 4.5 + 1) {};
        \node[circle, fill = red, inner sep = 2pt, label = right:{\textcolor{red}{4}}] (33) at (3 + 3, 4.5 + 1.5) {};

        \foreach \y in {0,...,3}{
        \draw[thin] (0\y) -- (1\y) -- (2\y) -- (3\y);
        }
        \draw[thin] (10) -- (01);
        \draw[thin] (20) -- (11);
        \draw[thin] (30) -- (21);
        
        \draw[thin] (11) -- (02);
        \draw[thin] (21) -- (12);
        \draw[thin] (31) -- (22);

        \draw[thin] (12) -- (03);
        \draw[thin] (22) -- (13);
        \draw[thin] (32) -- (23);
    \end{tikzpicture}
    \caption{Marked poset associated to a skew Gelfand-Tsetlin polytope.}
    \label{fig:GTex}
\end{figure}

We now show Ehrhart positivity for these polytopes using \Cref{prop:productformula}, thereby proving a conjecture of Alexandersson-Alhajjar \cite[Conjecture 7]{GelfandTsetlinconjecture}.

\begin{thm}\label{thm:conjGelfand-Tsetlin}
    For any $\bm y, \bm z \in \Z_{\geq 0}^k$ and $m \geq 1$, the skew Gelfand-Tsetlin polytope $\OGT_k^m(\bm y, \bm z)$ is Ehrhart positive.
\end{thm}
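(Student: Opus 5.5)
The plan is to apply Proposition~\ref{prop:productformula} directly to the marked poset $(P,A,\l)$ describing $\OGT_k^m(\bm y,\bm z)$, rather than Theorem~\ref{thm:posetfamilies}. It then suffices to show that every poset occurring as a factor $I_i\setminus(I_{i-1}\cup\uparrow a_i)$ has an order polynomial with nonnegative coefficients. Concretely, fix $\bm y,\bm z$ and order $A$ by the values of $\l$: first sort by value, then break ties by the order of $P$. This is a natural labeling $A=\{a_0,\dots,a_r\}$ with $\l(a_0)\le\cdots\le\l(a_r)$, so $t_i=\l(a_i)-\l(a_{i-1})\ge 0$. Dilating by $n$ replaces $\l$ by $n\l$, which keeps the same labeling and replaces each $t_i$ by $nt_i$. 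Hence
\[
E_{\OGT_k^m(\bm y,\bm z)}(n) \ = \ \sum_{I_0\subset\cdots\subset I_r}\prod_{i=1}^r \Omega_{I_i\setminus(I_{i-1}\cup\uparrow a_i)}(nt_i).
\]
If each $\Omega_{I_i\setminus(I_{i-1}\cup\uparrow a_i)}$ has nonnegative coefficients, then so does this polynomial in $n$, because $t_i\ge 0$.

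The next step is to identify the pieces $Q_i=I_i\setminus(I_{i-1}\cup\uparrow a_i)$.

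First, $Q_i$ contains no element of $A$. Indeed, $a_j\in I_{i-1}$ for $j<i$; $a_i\in\uparrow a_i$; and $a_j\notin I_i$ for $j>i$, since $a_j\in I_j\setminus I_{j-1}$ and $I_i\subseteq I_{j-1}$. So each $Q_i$ is a subset of the unmarked elements $\{(i,j)\colon 1\le j\le m\}$.

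Second, $Q_i$ is order-convex in $P$: it is an ideal $I_i$ intersected with the complement of an ideal and the complement of a filter.

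Third, I will pass to the coordinates $(r,c)=(i,i+j)$. The cover $(i,j)\prec(i,j+1)$ becomes $(r,c)\to(r,c+1)$, and the cover $(i,j+1)\prec(i+1,j)$ becomes $(r,c)\to(r+1,c)$. So the unmarked part of $P$ is the induced subposet of $\Z^2$ (product order) on the parallelogram $\{1\le r\le k,\ r+1\le c\le r+m\}$. After reflecting so that the order matches the cell order of a skew diagram, this is the poset of a skew shape. An order-convex subset of such a region should again be a skew shape, possibly a disjoint union of skew shapes (a disjoint union of skew shapes is still a skew shape after shifting components). Each $Q_i$ is then a skew-shaped poset. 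By \cite[Proposition 4.1]{skewshapes} together with Theorem~\ref{thm:orderpolynomialslinearterm}, applied to the family of skew shapes, which is closed under ideals and filters, $\Omega_{Q_i}$ has nonnegative coefficients, which finishes the argument.

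The main obstacle is the third step, namely checking that every $Q_i$ really is a (non-shifted) skew shape. The paper itself warns that ideals and filters of the full poset $P$, marked elements included, are only shifted skew shapes. The point to exploit is that the pieces avoid all marked elements $(i,0)$ and $(i,m+1)$, so they live inside the unmarked parallelogram. There I must verify carefully that the induced order is exactly the product order restricted to the region, with no extra relations coming through marked elements. I will also check that convex subsets of the region are skew shapes: each row and column of the subset must be an interval, and the rows must be ordered monotonically. If this identification fails in general, my fallback is to exploit that each $a_i$ removes the whole filter $\uparrow a_i$ and the ideal $I_{i-1}$ containing all earlier marked elements. From this I would try to show directly that the boundary of $Q_i$ is a lattice path on each side, so that $Q_i$ is a skew shape.
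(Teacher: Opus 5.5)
\textbf{The identification of the pieces $Q_i$ as skew shapes rests on a false claim.} Your overall plan is the paper's: apply Proposition~\ref{prop:productformula} directly to the Gelfand--Tsetlin marked poset, note that each factor $Q_i$ is order-convex in $P$ and disjoint from $A$, show every $Q_i$ is a skew-shape poset, and finish with \cite[Proposition 4.1]{skewshapes} and Theorem~\ref{thm:orderpolynomialslinearterm}. The problem is in your third step.

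In your coordinates $(r,c)=(i,i+j)$, the unmarked region has rows $[r+1,r+m]$. These rows move right as $r$ grows, while the order increases to the right and downward. So the parallelogram leans the ``shifted'' way and is not convex in $\mathbb{Z}^2$: for instance $(1,2)\le(2,2)\le(2,3)$, with $(2,2)$ outside the region. Neither order-compatible symmetry (transposition, rotation by $180^\circ$) fixes this.

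Concretely, take $k=2$, $m=3$. The unmarked elements form the poset in which $(1,1)\prec(1,2)$ lie below the incomparable pair $(1,3),(2,1)$, which both lie below $(2,2)\prec(2,3)$. This is not the cell poset of any skew shape. The $4$-cycle in its Hasse diagram would have to be a $2\times2$ block. The cell $(1,1)$ is attached only to the minimum of that block, and convexity of skew diagrams then forces an extra cell that is absent. So you cannot use ``convex subsets of a skew shape are skew shapes''. Order-convexity inside the unmarked region cannot suffice either, since the region itself is such a set.

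\textbf{How to close the gap.} The marked elements, which you treat as a nuisance, are what make the step work. $Q_i$ is order-convex in $P$, not just in the unmarked region, and the hole $(2,2)$ above is exactly the marked element $(i,j)=(2,0)$. In general, $P$ carries the product order on $\{1\le r\le k,\ r\le c\le r+m+1\}$; a staircase path shows that comparable points are connected inside $P$. Now suppose $x\le y\le z$ in $\mathbb{Z}^2$ with $x,z\in Q_i$.
\begin{itemize}
\item If $y\in P$, then $y\in Q_i$ by convexity.
\item If $c_y<r_y$, the marked element $(c_y,c_y)$, i.e.\ $(i,j)=(c_y,0)$, lies between $x$ and $z$ in $P$.
\item If $c_y>r_y+m+1$, the marked element $(c_y-m-1,c_y)$, i.e.\ $(i,j)=(c_y-m-1,m+1)$, lies between them.
\end{itemize}
The last two cases contradict $Q_i\cap A=\emptyset$. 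Hence $Q_i$ is convex in $\mathbb{Z}^2$, i.e.\ a (possibly disconnected) skew shape, and your argument then goes through.

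\textbf{Comparison with the paper.} The corrected version of your argument is uniform over all pieces. The paper instead uses the structure of the chain of ideals. It places each piece $I'\setminus(I\cup\uparrow a_{r+1})$ inside a product of two chains $S$, cut out by rank bounds coming from the consecutive marked elements $a_r,a_{r+1}$. It then obtains the piece from $S$ by removing an ideal and a filter.
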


\begin{proof}
    Let the posets $A \subseteq P$ be as constructed above. Consider $A=\{a_r=(i_r,j_r)\colon r\in [2k]\}$ naturally labelled. 
    Note that for any $r \in [2k - 1]$, we must have
    \begin{itemize}
        \item $j_r = 0$, $j_{r + 1} = m + 1$, and $i_r \geq i_{r + 1}$, or
        \item $j_r = m + 1$, $j_{r + 1} = 0$, and $i_r < i_{r + 1}$, or
        \item $j_r = j_{r + 1}$ and $i_r + 1 = i_{r + 1}$.
    \end{itemize}
    Let $r \in [2k]$ and $I \subset I'$ be ideals of $P$ such that $a_1, \ldots, a_r \in I$, $a_{r + 1} \in I'\setminus I$, and $a_{r + 2}, \ldots, a_{2k} \notin I'$. 
    The expression from \Cref{prop:productformula} involves order polynomials of posets of the form $J = I' \setminus(I \mathop{\cup} \uparrow a_{r + 1})$. 
    The result follow since these posets are always cell posets of skew-shapes:

    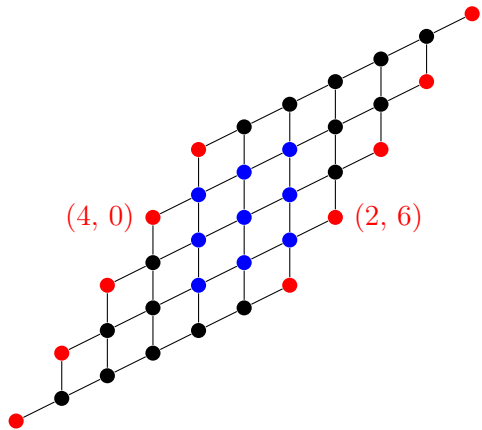
\begin{figure}[h]
        \centering
        \begin{tikzpicture}[scale = 0.6]
            \node[circle, fill = red, inner sep = 2pt] (00) at (0, 0) {};
            \foreach \x in {1,...,5}{
            \node[circle, fill = black, inner sep = 2pt] (\x0) at (\x, 0.5*\x) {};
            }
            \node[circle, fill = red, inner sep = 2pt] (60) at (6, 3) {};

            \node[circle, fill = red, inner sep = 2pt] (01) at (1, 1.5) {};
            \foreach \x in {1,2}{
            \node[circle, fill = black, inner sep = 2pt] (\x1) at (1 + \x, 1.5 + 0.5*\x) {};
            }
            \foreach \x in {3,4,5}{
            \node[circle, fill = blue, inner sep = 2pt] (\x1) at (1 + \x, 1.5 + 0.5*\x) {};
            }
            \node[circle, fill = red, inner sep = 2pt, label = right:{\textcolor{red}{(2, 6)}}] (61) at (7, 4.5) {};

            \node[circle, fill = red, inner sep = 2pt] (02) at (2, 3) {};
            \foreach \x in {1,5}{
            \node[circle, fill = black, inner sep = 2pt] (\x2) at (2 + \x, 3 + 0.5*\x) {};
            }
            \foreach \x in {2,3,4}{
            \node[circle, fill = blue, inner sep = 2pt] (\x2) at (2 + \x, 3 + 0.5*\x) {};
            }
            \node[circle, fill = red, inner sep = 2pt] (62) at (8, 6) {};

            \node[circle, fill = red, inner sep = 2pt, label = left:{\textcolor{red}{(4, 0)}}] (03) at (3, 4.5) {};
            \foreach \x in {4,5}{
            \node[circle, fill = black, inner sep = 2pt] (\x3) at (3 + \x, 4.5 + 0.5*\x) {};
            }
            \foreach \x in {1,2,3}{
            \node[circle, fill = blue, inner sep = 2pt] (\x3) at (3 + \x, 4.5 + 0.5*\x) {};
            }
            \node[circle, fill = red, inner sep = 2pt] (63) at (9, 7.5) {};

            \node[circle, fill = red, inner sep = 2pt] (04) at (4, 6) {};
            \foreach \x in {1,...,5}{
            \node[circle, fill = black, inner sep = 2pt] (\x4) at (4 + \x, 6 + 0.5*\x) {};
            }
            \node[circle, fill = red, inner sep = 2pt] (64) at (10, 9) {};

            \foreach \y in {0,...,4}{
            \draw[thin] (0\y) -- (1\y) -- (2\y) -- (3\y) -- (4\y) -- (5\y) -- (6\y);
            }

            \draw[thin] (10) -- (01);
            \draw[thin] (20) -- (11);
            \draw[thin] (30) -- (21);
            \draw[thin] (40) -- (31);
            \draw[thin] (50) -- (41);
            \draw[thin] (60) -- (51);

            \draw[thin] (11) -- (02);
            \draw[thin] (21) -- (12);
            \draw[thin] (31) -- (22);
            \draw[thin] (41) -- (32);
            \draw[thin] (51) -- (42);
            \draw[thin] (61) -- (52);

            \draw[thin] (12) -- (03);
            \draw[thin] (22) -- (13);
            \draw[thin] (32) -- (23);
            \draw[thin] (42) -- (33);
            \draw[thin] (52) -- (43);
            \draw[thin] (62) -- (53);

            \draw[thin] (13) -- (04);
            \draw[thin] (23) -- (14);
            \draw[thin] (33) -- (24);
            \draw[thin] (43) -- (34);
            \draw[thin] (53) -- (44);
            \draw[thin] (63) -- (54);
        \end{tikzpicture}
        \caption{Subposet $S$ (in blue) containing $J$ if $a_r = (4, 0)$ and $a_{r + 1} = (2, 6)$.}
        \label{fig:GTrec}
    \end{figure}
    
    When $j_r = 0$ and $j_{r + 1} = m + 1$, we consider the subposet (see \Cref{fig:GTrec})
    \begin{equation*}
        S = \{(i, j) \in P \colon i \in [i_{r + 1}, i_r] \text{ and }i_r + j_r < i + j < i_{r + 1} + j_{r + 1}\}.
    \end{equation*}
    We can obtain $J$ from $S$ by removing ideals and filters. 
    Since $S$ is always a cell poset of a skew-shape (product of two chains), so is $J$. 
    The case $j_r = m + 1$ and $j_{r + 1} = 0$ is similar.

    If $j_r = j_{r + 1} = 0$, then consider the subposet
    \begin{equation*}
        S = \{(i, j) \in P \colon i \in [i', i_r] \text{ and } i_r < i + j < i' + m + 1\}
    \end{equation*}
    where $i' \in [k]$ is the smallest such that $(i', m + 1)$ appears after $a_r$ in the total order on $A$. 
    We can obtain $J$ from $S$ by removing ideals and filters, and since $S$ is again a product of two chains, the result follows. 
    The case $j_r = j_{r + 1} = m + 1$ is analogous.
\end{proof}

We conclude by mentioning that similar techniques can be used to show that related polytopes studied in the literature also satisfy multivariate Ehrhart positivity.

For one, for certain faces of Gelfand-Tsetlin polytopes $\OGT_k^m(\bm y, \bm z)$, the lattice points correspond to \emph{flagged} semi-standard Young tableaux, introduced by Lascoux-Sch\"utzenberger~\cite{LS}: given weakly increasing vectors $\bm a, \bm b \in \Z^k_{\geq 0}$ such that $0 \leq a_i < b_i \leq m + 1$, the lattice points in the face of $\OGT_k^m(\bm y, \bm z)$ given by the equalities $x_{i, j} = x_{i, j - 1}$ for all  $j \notin [a_i + 1, b_i]$ and all $i \in [k]$
correspond to semi-standard Young tableaux of skew-shape $(\tilde y_k, \ldots, \tilde y_2, \tilde y_1)/ (\tilde z_k, \ldots, \tilde z_2, \tilde z_1)$ such that all entries in the $i$-th row are from $[a_i + 1, b_i]$. This face can be viewed as a marked order polytope itself, namely on the poset obtained by identifying the elements involved in the defining equalities. The same method as in Theorem~\ref{thm:conjGelfand-Tsetlin} can then be applied to show Ehrhart positivity. These faces of Gelfand-Tsetlin polytopes were studied by Kogan \cite{Kogan}. This also answers a special case of \cite[Conjecture 10]{GelfandTsetlinconjecture} since key polynomials associated to $312$-avoiding permutations are flagged Schur polynomials \cite[Theorem 14.1]{PSChains}.

Finally, similar techniques can also be used to show that a type $C$ analogue of Gelfand-Tsetlin polytopes, introduced by Berenstein-Zelevinsky~\cite{typec} and viewed as a marked order polytope by Ardila-Bliem-Salazar~\cite[Section 4.2]{ArdilaBliemSalazar}, satisfies multivariate Ehrhart positivity.

\textbf{Acknowledgements:} We thank Per Alexandersson for pointing us to further applications, and  Petter Brändén and Benjamin Schröter for helpful comments.

\bibliographystyle{siam}
\bibliography{bibliography}

\end{document}